\theoremstyle{plain}
\newtheorem{theorem}{Theorem}
\newtheorem{lemma}{Lemma}
\theoremstyle{definition}
\newtheorem*{theorem*}{Theorem}
\newtheorem*{lemma*}{Lemma}
\newtheorem*{claim*}{Claim}
\newtheorem*{exercise*}{Exercise}
\newtheorem*{note*}{Note}
\newtheorem*{example*}{Example}
\newtheorem*{problem*}{Problem}
\newtheorem*{solution*}{Solution}
\newtheorem*{remark*}{Remark}
\newtheorem{corollary}[]{Corollary}
\newtheorem{example}[]{Example}
\newtheorem{definition}[]{Definition}
\newcommand{\x}{\text{\boldmath{$x$}}}
\newcommand{\g}{\text{\boldmath{$g$}}}
\newcommand{\w}{\text{\boldmath{$w$}}}
\newcommand{\vv}{\text{\boldmath{$v$}}}
\newcommand{\wout}{\backslash}                          
\newcommand{\integer}{\ensuremath{\mathbb{Z}}}  
\newcommand{\cross}{\ensuremath{^{\times}}}     
\newcommand{\iso}{\cong}                        
\newcommand{\ssum}[2] {\overset{#2}{\underset{#1}{\sum}}}  
\newcommand{\sprod}[2] {\overset{#2}{\underset{#1}{\prod}}} 
\newcommand{\s}[3] {\overset{#3}{\underset{#2}{#1}}}
\begin{document}

\title{}
\begin{center}
\textbf{ Lifting Automorphisms of Quotients by Central Subgroups }

Ben Kane, Andrew Shallue
\vspace{.25in}

Department of Mathematics\\ University of Wisconsin-Madison \\
  480 Lincoln Dr\\  Madison, WI 53706, USA\\
kane@math.wisc.edu\footnote{now bkane@math.uni-koeln.de}, shallue@math.wisc.edu\footnote{now ashallue@iwu.edu}
\end{center}
\date{\today}
\subjclass[2000]{20F28}
\begin{abstract}
Given a finitely presented group $G$, we wish to explore the conditions under 
which automorphisms of quotients $G/N$ can be lifted to automorphisms of $G$.  
We discover that in the case where $N$ is a central subgroup of $G$, the 
question of lifting can be reduced to solving a certain matrix equation.  We 
then use the techniques developed to show that $Inn(G)$ is not characteristic 
in $Aut(G)$, where $G$ is a metacyclic group of order $p^n$, $p\neq 2$.

\end{abstract}
\keywords{finitely presented group, automorphism group, lift}
\maketitle

\section{Introduction}

Let $F$ be the free group on $n$ letters, and let $G$ be a quotient of that 
group.  We will be working with a given presentation of $G$, namely
$$
G := \left<x_1, x_2, \dots x_n | r_1(\x), r_2(\x), \dots r_m(\x)\right>,
$$
where $\x$ represents the $n$-tuple $(x_1,x_2,\dots, x_n)$.  This vector 
notation continues throughout the paper.  For later ease of exposition, we 
will think of the relations $r_k$ as noncommutative monomials on $n$ variables 
$(x_1,\dots, x_n)$ defined by 
$$
r_k(\x)=\sprod{l=1}{s_k}{x_{j_{k,l}}^{e_{k,l}}}
$$
with $e_{k,l}\in \{\pm 1\}$.

It is a well known fact that if $N$ is a characteristic subgroup of $G$, then 
automorphisms of $G$ induce automorphisms of $G/N$ canonically by acting on 
the coset representatives.  However, much less is known about the conditions under which a lift of an element of $Aut(G/N)$ exists.

Study so far has focused on the case where $G$ is the free group $F$.  Many 
techniques have been developed to show whether automorphisms of various 
quotients of $F$ are tame (i.e. for which lifts to $F$ exist).  See for 
instance \cite{FreeAut65,MetaB65,NonComm97, TameFG98}.

We show in the case where $N$ is a central subgroup of an arbitrary group $G$ 
that automorphisms of $G/N$ are in one-to-one correspondence with solutions to a 
certain set of matrix equations that depend only on the relations.

\section*{Acknowledgements} 
The authors are grateful to N. Boston for giving motivation to the problem and for valuable conversation.

\section{Homomorphic Lifts}
First, assume that $G$ is a finitely presentable group, and $N$ is a 
cyclic, central subgroup of $G$, generated by an element $z$.  We will later 
generalize to the case where $N$ is not cyclic.  We wish to study when 
automorphisms of $G/N$ can be lifted to automorphisms of $G$.  We first give a 
condition for when such automorphisms lift to homomorphisms of $G$, so in this section lift means homomorphic lift.

We will also consider $G$ as the image of $F$ under the canonical quotient map 
$\pi: F\to F/R$, where $R$ is the normal closure of the set 
$\{r_1(\x) \dots r_m(\x)\}$.  However, this will be for convenience only.  In 
practice, working with the relations will suffice, as evidenced by the 
following lemma, the proof of which is immediate.

\begin{lemma}\label{normclosure}
Let $\theta \in Hom(F,H)$ be given.  Then $\theta(r_k(\x))=1$ for all $k$ if and only if
$\theta(R)=1$.
\end{lemma}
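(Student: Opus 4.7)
The plan is to prove both directions separately, each by unwinding the definition of the normal closure $R$. The backward direction is essentially trivial: since each $r_k$ lies in $R$ by construction, if $\theta(R) = 1$ then $\theta(r_k) = 1$ for every $k$. So the only content is the forward direction.

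For the forward direction, I would recall that the normal closure $R$ of the set $\{r_1(\x), \ldots, r_m(\x)\}$ in $F$ consists of all finite products of elements of the form $f \, r_k(\x)^{\pm 1} f^{-1}$ with $f \in F$ and $1 \le k \le m$. This is the standard description of the smallest normal subgroup of $F$ containing the given set. Assuming $\theta(r_k(\x)) = 1$ for all $k$, I would apply $\theta$ to a typical generator: since $\theta$ is a homomorphism,
\[
\theta\bigl(f \, r_k(\x)^{\pm 1} f^{-1}\bigr) = \theta(f)\, \theta(r_k(\x))^{\pm 1}\, \theta(f)^{-1} = \theta(f)\cdot 1 \cdot \theta(f)^{-1} = 1.
\]
Since $\theta$ is a homomorphism and every element of $R$ is a product of such generators, $\theta$ sends each element of $R$ to the identity, giving $\theta(R) = 1$.

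There is no substantive obstacle here; the only "choice" is how explicitly to invoke the description of the normal closure. This is presumably why the authors note that the proof is immediate and do not write it out.
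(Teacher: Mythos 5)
Your proof is correct and matches what the authors intend by calling the proof ``immediate'': the backward direction is trivial, and the forward direction follows because $\ker\theta$ is a normal subgroup containing each $r_k(\x)$, hence contains the normal closure $R$ --- which is exactly what your explicit computation with products of conjugates $f\,r_k(\x)^{\pm 1}f^{-1}$ establishes. No gaps; this is the standard argument.
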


The following definition will help to clarify our direction in this paper:
\begin{definition}\label{extends}
We say that an $n$-tuple $\g=(g_1,g_2,\dots, g_n)\in G^n$ \emph{extends} to a 
homomorphism if the homomorphism $\theta:F\mapsto G$, defined by 
$\theta(x_i)=g_i$, factors through $R$.  In this case $\theta\circ \pi^{-1}$ 
is well defined and defines an element $\psi\in Hom(G,G)$.
\end{definition}

\begin{lemma}\label{elminduce}
An $n$-tuple $\g\in G^n$ extends to a homomorphism if and only if $r_k(\g)=1$ for every 
$k= 1,\dots,m$.
\end{lemma}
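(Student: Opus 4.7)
The plan is to prove both directions by reducing to Lemma \ref{normclosure} and exploiting the fact that homomorphisms commute with the word formation in $r_k$.

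First, I would set up the canonical homomorphism. Since $F$ is free on $x_1,\dots,x_n$, the assignment $x_i \mapsto g_i$ always extends uniquely to a homomorphism $\theta: F \to G$, regardless of whether the relations are satisfied. The real content of Definition \ref{extends} is whether this $\theta$ descends to $G = F/R$, i.e., whether $\theta(R) = 1$.

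For the forward direction, suppose $\g$ extends. Then by definition $\theta$ factors through $R$, so $\theta(R) = 1$. Since each $r_k(\x) \in R$, we get $\theta(r_k(\x)) = 1$. Because $\theta$ is a homomorphism and $r_k(\x) = \prod_{l=1}^{s_k} x_{j_{k,l}}^{e_{k,l}}$ is a word in the generators, we can push $\theta$ inside:
\[
1 = \theta(r_k(\x)) = \prod_{l=1}^{s_k} \theta(x_{j_{k,l}})^{e_{k,l}} = \prod_{l=1}^{s_k} g_{j_{k,l}}^{e_{k,l}} = r_k(\g).
\]

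For the reverse direction, suppose $r_k(\g) = 1$ for all $k$. Running the same computation in reverse, $\theta(r_k(\x)) = r_k(\g) = 1$ for every $k$. By Lemma \ref{normclosure}, this forces $\theta(R) = 1$, so $\theta$ factors through $R$; that is precisely what it means for $\g$ to extend to a homomorphism.

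There is essentially no obstacle here — the only subtlety is being explicit that, because $\theta$ is a group homomorphism and each $r_k$ is literally a product of generators and their inverses, the value $\theta(r_k(\x))$ genuinely equals $r_k(\g)$ as elements of $G$. Everything else is immediate from the universal property of $F$ and from Lemma \ref{normclosure}.
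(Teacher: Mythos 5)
Your proof is correct and follows essentially the same route as the paper's: both reduce the statement to Lemma \ref{normclosure} and use the fact that $\theta$, being a homomorphism, satisfies $\theta(r_k(\x)) = r_k(\g)$. The only cosmetic difference is that you argue the two directions separately while the paper chains the equivalences, but the content is identical.
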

\begin{proof}
Let $\g\in G^n$ be given.  Consider $\theta$ defined as above.  Note that $\g$ 
extends to a homomorphism if and only if $\theta(R)=1$.  By lemma \ref{normclosure}, 
$\theta(R)=1$ if and only if $\theta(r_k(\x))=1$ for all $k$.  So it remains to show that 
$\theta(r_k(\x))=1$ if and only if $r_k(\g)=1$.  However, since $\theta$ is a 
homomorphism, 
$$
\theta(r_k(\x))=r_k(\theta(\x))=r_k(\g).
$$
\end{proof}

The following definition is vital, since homomorphic lifts are the focus of 
this paper.
\begin{definition}
A \emph{lift} of $\varphi\in End(G/N)$ is $\psi\in End(G)$ such that
$$\psi(g)N=\varphi(gN)\qquad \text{for every }g\in G.$$
\end{definition}

\begin{theorem}\label{reln}
If $\varphi \in End(G/N)$, then there is a one-to-one correspondence 
between lifts of $\varphi$ to $End(G)$ and $n$-tuples $\g\in G^n$ such that 
$g_i N =\varphi(x_iN)$ and $r_k(\g)=1$ for every $k\in 1,\dots,m$.
\end{theorem}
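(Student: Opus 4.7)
The plan is to exhibit a bijection by constructing explicit maps in both directions between lifts of $\varphi$ and the admissible tuples, then verifying they are mutually inverse. The substantive content has already been packaged into Lemma~\ref{elminduce}: what remains is essentially to translate between two equivalent descriptions of the same endomorphism.

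First I would send each lift $\psi \in End(G)$ to the tuple $\g := (\psi(x_1), \dots, \psi(x_n))$. Admissibility of this tuple is immediate: $g_i N = \psi(x_i) N = \varphi(x_i N)$ by the defining property of a lift, and $r_k(\g) = \psi(r_k(\x)) = \psi(1) = 1$ because $\psi$ is a homomorphism and each $r_k$ vanishes in $G$.

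Conversely, given an admissible tuple $\g$, Lemma~\ref{elminduce} produces a homomorphism $\psi \in End(G)$ with $\psi(x_i) = g_i$. To confirm that $\psi$ is actually a lift of $\varphi$, I would observe that both $g \mapsto \psi(g)N$ and $g \mapsto \varphi(gN)$ are homomorphisms $G \to G/N$ and they agree on the generating set, since $\psi(x_i) N = g_i N = \varphi(x_i N)$; hence they agree on all of $G$.

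Finally, the two constructions are mutually inverse because any homomorphism of $G$ is determined by its values on the generators $x_1, \dots, x_n$: starting from $\psi$, extracting $\g$, and rebuilding yields a homomorphism agreeing with $\psi$ on generators (hence equal to $\psi$), and starting from $\g$, building $\psi$, and extracting the tuple gives back $\g$. I do not anticipate any real obstacle; the only point requiring care is remembering to verify the lifting property on all of $G$ rather than only on generators, and this is a one-line consequence of the universal property of a presentation.
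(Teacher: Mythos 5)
Your proposal is correct and follows essentially the same route as the paper: send a lift to its tuple of generator images, and use Lemma~\ref{elminduce} to rebuild an endomorphism from an admissible tuple. You are in fact slightly more careful than the paper at the final step, explicitly upgrading agreement on generators to agreement of the two homomorphisms $g \mapsto \psi(g)N$ and $g \mapsto \varphi(gN)$ on all of $G$, which the paper leaves implicit.
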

\begin{proof}
First suppose $\psi$ is a lift of $\varphi$.  Then by definition $\varphi(x_iN) = \psi(x_i)N$, so we set $g_i := \psi(x_i)$.  Also, 
$$
1 = \psi(1) = \psi(r_k(\x)) = r_k(\psi(\x))=r_k(\g)
$$
since $\psi$ is a homomorphism.

Conversely, suppose we have an $n$-tuple $\g$ such that $g_iN = \varphi(x_iN)$ 
and $r_k(\g) = 1$ for every $k = 1 \dots m$.  Then by Lemma \ref{elminduce}, 
$\g$ extends to $\psi \in End(G)$, where $\psi(x_i) = g_i$.  Thus
$$
\psi(x_i)N = g_iN = \varphi(x_iN)
$$
and this is exactly the definition of $\psi$ being a lift of $\varphi$.
\end{proof}

We now give a nice characterization of the lifts of $\varphi$.  For this we define a certain matrix and a vector.

Define $m_{ij}$ to be the degree of $x_j$ in the commutative image of the word 
$r_i(\x)$.  Note that since $r_i(\x)=\sprod{l=1}{s_i} x_{j_{i,l}}^{e_{i,l}}$,
$$
m_{ij}=\ssum{\s{l\in 1..s_i}{j_{i,l}=j}{}}{}e_{i,l}.
$$
We consider the matrix $M:=\left(m_{ij}\right)$.
To make the construction of $M$ clear, we give an example.
\begin{example}
For 
$$
r_1(\x)=x_1^2\cdot x_2^{-1}\cdot x_1^{-5}\cdot x_2^{-1},\qquad r_2(\x)=x_1\cdot x_2^{-3}\cdot x_1^7,
$$
we have
$$M=
\left( 
\begin{smallmatrix}
-3& -2\\
8& -3
\end{smallmatrix}
\right)
$$
\end{example}

For $\varphi\in Aut(G/N)$, fix a set of coset representatives 
$\overline{x_i}\in \varphi(x_iN)$.  Since 
$$
N=\varphi(r_k(\x)N)=r_k(\varphi(\x N))=r_k(\overline{\x})N,
$$ 
it is clear that $r_k(\overline{\x})\in N$.  Since $N$ is generated by $z$, we
can choose $w_i$ such that $r_i(\overline{\x})=z^{-w_i}$.  Note that 
$\w:=(w_1,\dots,w_m)$ is only defined up to the order of $N$.

\begin{theorem}\label{matrix}
The lifts of $\varphi$ are in one-to-one correspondence with solutions 
$\vv=(v_1,\dots v_n)$ to the matrix equation 

$$
M\vv=\w \pmod{\#N}
$$
where, if $N$ is infinite, we simply mean the matrix equation on the integers.
\end{theorem}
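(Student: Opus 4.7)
The plan is to reduce the condition $r_k(\g)=1$ from Theorem \ref{reln} to the matrix equation by parametrizing the cosets $\varphi(x_iN)$ and exploiting the centrality of $z$.

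First I would use Theorem \ref{reln} to identify lifts of $\varphi$ with $n$-tuples $\g$ satisfying $g_i N = \varphi(x_iN)$ and $r_k(\g)=1$ for each $k$. Given the fixed coset representatives $\overline{x_i}\in\varphi(x_iN)$ and the generator $z$ of $N$, every element of $\varphi(x_iN)$ has the form $\overline{x_i}z^{v_i}$, and the integer $v_i$ is unique modulo $\#N$ (and uniquely determined in $\integer$ if $N$ is infinite). So the choice of $\g$ is precisely the choice of a vector $\vv=(v_1,\dots,v_n)$.

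Next I would compute $r_k(\g)$ in terms of $\vv$. Writing
$$
r_k(\g)=\sprod{l=1}{s_k}\bigl(\overline{x_{j_{k,l}}}\,z^{v_{j_{k,l}}}\bigr)^{e_{k,l}},
$$
the key step is to move all the $z$-factors to the right, which is legal because $z$ is central. Each factor contributes $e_{k,l}v_{j_{k,l}}$ to the exponent of $z$, and the remaining letters spell out $r_k(\overline{\x})$. Collecting exponents by index $j$ and invoking the definition of $m_{kj}=\sum_{l:\,j_{k,l}=j}e_{k,l}$ gives
$$
r_k(\g)=r_k(\overline{\x})\cdot z^{\sum_{j=1}^n m_{kj}v_j}=z^{-w_k}\cdot z^{(M\vv)_k}=z^{(M\vv)_k-w_k},
$$
using the defining relation $r_k(\overline{\x})=z^{-w_k}$.

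Finally, since $z$ has order $\#N$, the condition $r_k(\g)=1$ for every $k$ is equivalent to $(M\vv)_k\equiv w_k\pmod{\#N}$ for every $k$, i.e.\ $M\vv=\w\pmod{\#N}$ (an equation in $\integer$ when $N$ is infinite). The correspondence $\g\leftrightarrow\vv$ is a bijection with the indicated ambiguity modulo $\#N$, so composing with Theorem \ref{reln} yields the claimed one-to-one correspondence. The only real subtlety is the centrality bookkeeping in the third step: one must verify that moving $z^{\pm v_{j_{k,l}}}$ past letters of $\overline{\x}$ produces exactly the signed sum matching the commutative-image degree $m_{kj}$, which is straightforward but needs the inverse case ($e_{k,l}=-1$) handled carefully so that $(\overline{x_j}z^{v_j})^{-1}=z^{-v_j}\overline{x_j}^{-1}$ contributes $-v_j$ rather than $+v_j$ to the exponent.
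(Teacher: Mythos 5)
Your proposal is correct and follows essentially the same route as the paper: both reduce to Theorem \ref{reln}, parametrize $g_i=\overline{x_i}z^{v_i}$, and use centrality of $z$ to factor $r_k(\g)$ as $r_k(\overline{\x})$ times $z$ raised to $\sum_j m_{kj}v_j$. Your explicit handling of the sign in the $e_{k,l}=-1$ case is a welcome bit of extra care, but the argument is the same as the paper's.
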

\begin{proof}
We know from above that lifts are in one-to-one correspondence with $\g\in G^n$ such 
that $r_k(\g)=1$ and $g_iN=\varphi(x_iN)$.

However, if $g_iN=\varphi(x_iN)=\overline{x_i}N$, then $g_i\in\overline{x_i}N$.
But then $g_i=\overline{x_i}z^{v_i}$ for some $i$.  So $g_iN=\varphi(x_iN)$ 
if and only if $g_i=\overline{x_i}z^{v_i}$ for some $v_i$.  So lifts are in 
one-to-one correspondence with $\g\in G^n$ such that $r_k(\g)=1$ and 
$g_i=\overline{x_i}z^{v_i}$.  

Since $z$ is central,
$$
\begin{array}{lcl}
r_i(\overline{x_1}z^{v_1},\dots,\overline{x_n}z^{v_n}) &=& r_i(\overline{\x})r_i(z^{v_1},z^{v_2},\dots,z^{v_n})\\
&=& z^{-w_i}z^{\ssum{j=1}{n} m_{ij}v_j}\\
&=& z^{-w_i+\ssum{j=1}{n} m_{ij}v_j}
\end{array}
$$

But this is equal to $1$ if and only if 
$-w_i+\ssum{j=1}{n} m_{ij}v_j=0\pmod{\#N}$.  This corresponds exactly to a 
solution of the above matrix equation.

\end{proof}
Having shown the result for $N$ cyclic, it is easy to generalize to the case 
when $N$ is not cyclic.

If $N$ is a central subgroup of $G$, generated by $z_1,\dots, z_t$, then 
we have 
$$r_k(\overline{\x})=z_1^{-w_{1,k}}z_2^{-w_{2,k}}\cdots z_t^{-w_{t,k}}$$

\begin{corollary}
The lifts of $\varphi$ are in one-to-one correspondence with solutions of the 
matrix equation 

$$
\left(
\begin{smallmatrix}
M&      &\\
 &\ddots&\\
 &      & M\\
\end{smallmatrix}
\right)\vv = 
\left( 
\begin{smallmatrix}
\w_1\\
\vdots\\
\w_t
\end{smallmatrix}
\right)
\pmod{\#N},
$$
where, if $N$ is infinite, we simply mean the matrix equation on the integers.

\end{corollary}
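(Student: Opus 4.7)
The plan is to imitate the proof of Theorem \ref{matrix} almost verbatim, with the single cyclic generator $z$ replaced by the full list $z_1,\ldots,z_t$. First, by Theorem \ref{reln}, lifts of $\varphi$ correspond bijectively to $n$-tuples $\g\in G^n$ satisfying $g_iN=\varphi(x_iN)$ and $r_k(\g)=1$ for every $k$. Since $g_i\in \overline{x_i}N$ and $N=\langle z_1,\ldots,z_t\rangle$, I can write
$$
g_i=\overline{x_i}\,z_1^{v_{1,i}}z_2^{v_{2,i}}\cdots z_t^{v_{t,i}}
$$
for some integer exponents $v_{s,i}$, so that lifts are parameterized by the data $\{v_{s,i}\}_{s,i}$.

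Next, since each $z_s$ is central, I would pull every $z_s$ past the letters of $r_k$, exactly as in the proof of Theorem \ref{matrix}, to obtain
$$
r_k(\g)=r_k(\overline{\x})\cdot\prod_{s=1}^{t}z_s^{\sum_{j=1}^{n}m_{kj}v_{s,j}}=\prod_{s=1}^{t}z_s^{-w_{s,k}+\sum_{j=1}^{n}m_{kj}v_{s,j}}.
$$
The condition $r_k(\g)=1$ then translates, on the level of exponents, into the system
$$
\sum_{j=1}^{n}m_{kj}v_{s,j}\equiv w_{s,k}\pmod{\#N}\qquad(1\le s\le t,\ 1\le k\le m).
$$
Arranging the $v_{s,j}$ as $\vv=(\vv_1,\ldots,\vv_t)^{\top}$ with $\vv_s=(v_{s,1},\ldots,v_{s,n})^{\top}$, these $tm$ congruences assemble into precisely the block-diagonal matrix equation of the statement, with right-hand side $(\w_1,\ldots,\w_t)^{\top}$ where $\w_s=(w_{s,1},\ldots,w_{s,m})^{\top}$.

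The main obstacle will be the step where the single group equation $\prod_{s=1}^{t} z_s^{a_s}=1$ is split into $t$ independent scalar congruences $a_s\equiv 0\pmod{\#N}$. This requires that $z_1,\ldots,z_t$ have been chosen to realize $N$ as an internal direct product of the cyclic subgroups $\langle z_s\rangle$, so that the coordinates $v_{s,i}$ are well-defined modulo the order of $z_s$ (which divides $\#N$); otherwise the correspondence between lifts and solution vectors $\vv$ becomes many-to-one rather than one-to-one. Granting this standard choice of generating set, the bijection is immediate from the cyclic case, since each block of the matrix equation is just a separate instance of Theorem \ref{matrix} applied to the generator $z_s$ and data $\w_s$.
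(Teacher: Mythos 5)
Your proposal is correct and follows essentially the same route as the paper, whose entire proof is the one-line remark that the result ``follows from the proof of the previous theorem, noting that each generator of $N$ commutes''; you have simply written out the details of that reduction. Your closing caveat---that splitting $\prod_s z_s^{a_s}=1$ into $t$ independent congruences requires the $z_s$ to realize $N$ as a direct product of cyclic subgroups, with each exponent read modulo the order of $z_s$ rather than $\#N$---is a genuine subtlety that the paper leaves implicit, and is worth stating.
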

\begin{proof}
The proof follows from the proof of the previous theorem, noting that each 
generator of $N$ commutes.
\end{proof}

\section{Automorphic Lifts}

In this section we investigate when such homomorphic lifts are 
automorphic.  As before, we assume that $G$ is finitely presented and $N$ is a 
central subgroup of $G$.

\begin{lemma}
If $N$ is abelian, finitely generated, and $\psi\in End(N)$, then $\psi$ 
surjective implies $\psi$ injective.
\end{lemma}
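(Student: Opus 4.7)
The plan is to exploit the fact that a finitely generated abelian group is a Noetherian $\integer$-module and therefore satisfies the ascending chain condition on subgroups. This reduces the proof to a short argument about iterates of $\psi$.

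First I would consider the ascending chain of kernels
$$\ker \psi \subseteq \ker \psi^2 \subseteq \ker \psi^3 \subseteq \cdots$$
viewed as subgroups of $N$. Because $N$ is finitely generated over the Noetherian ring $\integer$, every ascending chain of subgroups stabilizes, so there exists some $n$ with $\ker \psi^n = \ker \psi^{n+1}$.

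Next I would use surjectivity. Since $\psi$ is surjective, so is each iterate $\psi^n$. Given any $x \in \ker \psi$, I can therefore choose $y \in N$ with $\psi^n(y) = x$. Then
$$\psi^{n+1}(y) = \psi(\psi^n(y)) = \psi(x) = 0,$$
so $y \in \ker \psi^{n+1} = \ker \psi^n$, which forces $x = \psi^n(y) = 0$. Hence $\ker \psi = 0$ and $\psi$ is injective.

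I do not expect a serious obstacle here: the only ingredient beyond elementary manipulation is the Noetherian property of $N$, which is a standard consequence of the structure theorem (or directly of Hilbert's basis theorem applied to $\integer$). If one prefers to avoid invoking Noetherianness by name, an alternative is to split $N$ into its free part $\integer^r$ and torsion part $T$ via the structure theorem; surjective endomorphisms of $\integer^r$ have invertible determinant over $\rational$, hence are injective, while for the finite group $T$ surjectivity trivially implies injectivity by counting. Either route leads to the same conclusion with no computation.
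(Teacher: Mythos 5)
Your proof is correct, but it takes a genuinely different route from the paper's. The paper disposes of this lemma in one sentence by invoking the fundamental theorem of finitely generated abelian groups together with a rank count (rank of the image plus rank of the kernel equals the rank of $N$), which forces a surjection to have trivial kernel. You instead use the fact that $N$ is a Noetherian $\integer$-module: the chain $\ker\psi \subseteq \ker\psi^2 \subseteq \cdots$ stabilizes at some $n$, and then surjectivity of $\psi^n$ pushes any element of $\ker\psi$ back to an element of $\ker\psi^{n+1} = \ker\psi^n$, forcing it to be zero. This argument is complete as written and has two advantages: it avoids the structure theorem entirely, and it proves the stronger, fully general statement that a surjective endomorphism of any Noetherian module over any commutative ring is injective (the paper's rank argument, as stated, is really only tracking the torsion-free rank and leaves the torsion part implicit). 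Your second, sketched alternative --- splitting $N$ into $\integer^r$ and its torsion subgroup $T$ --- is essentially the paper's approach fleshed out; if you pursue that route, note that you should first check $\psi(T)\subseteq T$ (automatic, since $T$ is characteristic) and that surjectivity of $\psi$ on $N$ does descend to surjectivity on $T$ and on $N/T$ before applying the determinant and counting arguments. Either way, no gap.
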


This lemma follows from the fundamental theorem of abelian groups and the fact 
that the rank of the image plus the rank of the kernel equals the rank of $N$.

\begin{lemma}\label{homtoaut}
A lift $\psi\in End(G)$ of $\varphi\in Aut(G/N)$ is an automorphism if and 
only if $\psi(N)=N$.
\end{lemma}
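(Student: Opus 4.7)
The plan is to handle the two directions separately, with the forward direction essentially just unwinding the definition of a lift, and the backward direction splitting into a surjectivity argument and an injectivity argument that invokes the preceding lemma.

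For the forward implication, I would start with an automorphism $\psi$ that lifts $\varphi$. For any $n \in N$ the lift property gives $\psi(n)N = \varphi(nN) = \varphi(N) = N$, so $\psi(N) \subseteq N$. For the reverse containment, I would take $n \in N$, use surjectivity of $\psi$ to write $n = \psi(g)$, and then observe that $\varphi(gN) = \psi(g)N = nN = N$; since $\varphi$ is injective on $G/N$ this forces $g \in N$, so $n \in \psi(N)$.

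For the converse, assume $\psi(N) = N$. To show $\psi$ is surjective, given any $g \in G$ I would use that $\varphi$ is surjective on $G/N$ to find $h \in G$ with $\varphi(hN) = gN$, so $\psi(h)N = gN$ and hence $g = \psi(h) n$ for some $n \in N$. Since $\psi(N) = N$, I can write $n = \psi(n')$ for some $n' \in N$, and because $n'$ is central, $\psi(hn') = \psi(h)\psi(n') = \psi(h)n = g$. For injectivity, suppose $\psi(g) = 1$; then $\varphi(gN) = N$, and since $\varphi$ is an automorphism of $G/N$ we conclude $g \in N$. So the problem reduces to showing $\psi|_N$ is injective. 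This is where the previous lemma enters: $N$ is central, hence abelian, and it is finitely generated (by the $z_j$), and by hypothesis $\psi|_N$ is a surjective endomorphism of $N$, so the lemma yields injectivity.

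The main obstacle is precisely this last step, where one needs injectivity of $\psi$ on all of $G$ but only has direct control over its behavior modulo $N$ and on $N$ itself; routing the argument through the preceding finite-generation lemma is what makes the proof go through. Everything else is a matter of carefully chasing coset representatives through the lift relation $\psi(g)N = \varphi(gN)$.
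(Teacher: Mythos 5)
Your proof is correct and follows essentially the same route as the paper's: both reduce injectivity of $\psi$ to injectivity of $\psi|_N$ by showing $\ker\psi\subseteq N$ via injectivity of $\varphi$, then invoke the preceding finitely-generated-abelian lemma, and both derive surjectivity of $\psi$ from $\psi(N)=N$ together with $\operatorname{Im}(\psi)N=G$. (The appeal to centrality of $n'$ in your surjectivity step is unnecessary, since $\psi(hn')=\psi(h)\psi(n')$ is just the homomorphism property, but this is harmless.)
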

\begin{proof}
Consider $K:=Ker(\psi)$ and $H:=Im(\psi)$.  

Since $\psi$ is a lift of $\varphi$, we have the identity
$$N=\psi(K)N=\varphi(KN).$$
As $\varphi$ is injective, it follows that $KN=N$, and hence $K\subseteq N$.  
So $K=Ker(\psi|_N)$.  Therefore $\psi$ is injective on $G$ if and only if it 
is injective when resticted to $N$.  Because $N$ is abelian and finitely 
generated by assumption, it will suffice to show $\psi|_N$ is surjective, 
even if $N$ is infinite.

Moreover, since $\psi(G)N=\varphi(GN)=G$, it follows that $HN=G$.  
If $\psi(N)=N$, then $N\subseteq H$, so that $G=HN=H$.  Conversely, $N$ is 
in the image of $\psi$, and since for $g\in G$, $\psi(g)N=\varphi(gN)$, and 
$\varphi(gN)=N$ if and only if $g\in N$, we know that the preimage of $N$ is $N$.  
So we have $\psi$ surjective if and only if $\psi(N)=N$.  

\end{proof}

In the case where $\# N$ is finite and squarefree, the following result will 
show that the previous work for finding homomorphic lifts will suffice 
for showing that there is an automorphic lift.  The proof relies heavily on 
finite group theory, for which a good reference is \cite{Alge93}.
\begin{theorem}\label{sqrfree}
The automorphism $\varphi$ lifts to $\psi\in End(G)$ if and only if $\varphi$ 
lifts to some $\psi'\in Aut(G)$.
\end{theorem}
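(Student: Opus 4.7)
Plan: The backward implication is immediate, since any automorphic lift is in particular a homomorphic one. The forward direction is the interesting part, and the plan is to take a homomorphic lift $\psi \in End(G)$ of $\varphi$ and modify it by a suitable central homomorphism $\delta\colon G\to N$ so that the result meets the criterion of Lemma \ref{homtoaut}.

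The starting point is Lemma \ref{homtoaut}, which reduces the problem to making $\psi|_N\colon N\to N$ surjective (equivalently, bijective, since $N$ is finite). The squarefree hypothesis lets me decompose $N = \bigoplus_p N_p$ into Sylow subgroups, each cyclic of prime order. Because $N_p$ coincides with the $p$-torsion subgroup of $N$, it is fully invariant, so $\psi$ restricts to an endomorphism $\psi|_{N_p}\colon N_p\to N_p$; since $|N_p|$ is prime, this restriction is either zero or an automorphism. Set $P := \{p : \psi|_{N_p} = 0\}$; if $P=\emptyset$ the given $\psi$ is already an automorphism, so assume $P\neq\emptyset$.

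The key step is then to show that $G$ splits as an internal direct product $G = \psi(G)\times N_P$, where $N_P := \bigoplus_{p\in P}N_p$ and $N_{P^c}$ is the complementary Sylow summand. Three ingredients combine to give this: $\ker\psi = N_P$ (from the Sylow analysis together with $\ker\psi\subseteq N$, which was observed in the proof of Lemma \ref{homtoaut}); $\psi(G)\cap N = \psi(N) = N_{P^c}$ (if $\psi(g)\in N$ then $\varphi(gN) = N$, forcing $g\in N$ by injectivity of $\varphi$); and $\psi(G)\cdot N = G$ (since $\psi$ descends to the surjective $\varphi$). Together these yield $\psi(G)\cdot N_P = G$ and $\psi(G)\cap N_P = 1$, and the centrality of $N_P$ upgrades this factorization to a direct product.

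Given the decomposition, let $\delta\colon G\to N_P\subseteq N$ be the projection onto $N_P$, and define $\psi'(g) := \psi(g)\delta(g)$. Since $\delta$ takes values in the center, $\psi'$ is a homomorphism, and it still lifts $\varphi$ because $\delta(g)\in N$. A direct computation shows $\psi'|_N$ agrees with $\psi|_{N_{P^c}}$ (an automorphism there) on $N_{P^c}$ and acts as the identity on $N_P$, hence $\psi'|_N$ is an automorphism of $N$, and Lemma \ref{homtoaut} produces $\psi'\in Aut(G)$. The main obstacle is establishing the direct product decomposition $G = \psi(G)\times N_P$; this is precisely the place where squarefreeness does the work, since it forces each $\psi|_{N_p}$ to be an all-or-nothing map and makes the kernel $N_P$ split off cleanly as a direct summand of $N$ rather than sitting inside a tower of quotients.
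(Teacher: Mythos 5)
Your proof is correct and follows essentially the same route as the paper: both arguments reduce to Lemma \ref{homtoaut}, use squarefreeness to split $\ker\psi$ off as a direct factor of $N$ (your $N_P$ is exactly the paper's $K=\ker\psi$), establish the internal direct product $G=\mathrm{Im}(\psi)\times\ker\psi$ from the same three facts ($\ker\psi\subseteq N$, $\mathrm{Im}(\psi)\cap N=\psi(N)$, $\mathrm{Im}(\psi)N=G$), and your corrected map $\psi'(g)=\psi(g)\delta(g)$ is literally the paper's $(\mathrm{Id},\psi|_H)$ written multiplicatively. The only cosmetic difference is your explicit primary decomposition of $N$, where the paper simply invokes that a squarefree abelian group splits completely.
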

\begin{proof}
Let $\psi\in End(G)$ a lift of $\varphi$ be given.  Let $K=Ker(\psi)$ and 
$H=Im(\psi)$.  We will show that $G=K\times H$, from which the theorem follows
directly, since $\psi|_H$ is an isomorphism and $(Id,\psi|_H)$ 
will be a lift as desired, where $Id$ stands for the identity on $K$

From the proof of Lemma \ref{homtoaut}, $K\subseteq N$.  Since $\# N$ is 
squarefree and $N$ is abelian, it splits completely.  In particular, by the 
first isomorphism theorem, $N=K\times H_N$, where $H_N=Im(\psi|_N)$.  We know 
from the above proof that $HN=G$.  Then 
$$
G=H(K\times H_N)=(HK)(HH_N)=HKH=HK.
$$

Let $h\in H\cap K$ be given.  Notice first that $Im(\psi)\cap N = Im(\psi|_N)$
since the preimage of $N$ is contained in $N$.  Since $h\in K\subseteq N$, 
we have $h\in H_N$.  But $H_N\cap K=1$, so $h=1$.  Hence $G=H\ltimes K$.  
Therefore, since $K$ is central, $G=H\times K$.
\end{proof}

We are now going to construct a set of matrix equations whose solutions 
correspond to automorphic lifts of $\varphi$.  Let us first assume $N$ is 
cyclic and generated by $z$.  The key idea is that matching exponents of $z$ 
reduces to solving linear equations.  We first fix a noncommutative monomial
$$
f(\x):=\sprod{l=1}{s}x_{j_l}^{e_l}
$$
such that $f(\x)=z$.  Define 
$$
M_{m+1,j}:=\ssum{\s{l\in 1..s}{j_l=j}{}}{}e_l,
$$
which is the exponent of $x_j$ in the commutative image of $f(\x)$.  Since 
$\varphi(N)=N$, we can choose $w_{m+1}$ such that 
$z^{-w_{m+1}}=f(\overline{\x})$, where $\overline{x_i}$ is as above.

For an automorphism, the image of $z$ must be a generator of $N$.  We will 
define a matrix equation for each of the possible generators of $N$.  For $N$ 
infinite, define $w_{m+1}^{(1)}:=w_{m+1}+1$ and $w_{m+1}^{(2)}:=w_{m+1}-1$ and 
set
$$M':= 
\left(
\begin{array}{c}
M\\ 
M_{m+1}
\end{array}
\right).
$$
For $N$ finite, with $p$ the smallest prime dividing $\# N$, define 
$w_{m+1}^{(k)}:=w_{m+1}+k\frac{\# N}{p}$ for $k= 1,\dots, p-1$ and set 
$$M':= 
\left(
\begin{array}{c}
M\\ 
\frac{\#N}{p}M_{m+1}
\end{array}
\right).
$$
In either case, set 
$\w^{(k)}:=
\left(
\begin{array}{c}
\w\\
w_{m+1}^{(k)}
\end{array}
\right)
$
for every $k$.
\begin{theorem}\label{autmatrix}
Automorphic lifts of $\varphi\in Aut(G/N)$ are in one-to-one correspondence with 
solutions $\vv=(v_1,\dots,v_n)$ to the matrix equations 
$$
M'\vv=\w^{(k)}\pmod{ \#N}
$$
for $k=1,2$ if $N$ is infinite, and $k= 1,\dots, p-1$ for $N$ finite and $p$ 
the smallest prime dividing $\# N$.
\end{theorem}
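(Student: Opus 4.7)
The strategy is to combine Theorem~\ref{matrix} with Lemma~\ref{homtoaut}. Theorem~\ref{matrix} identifies the homomorphic lifts of $\varphi$ with solutions $\vv$ to $M\vv=\w\pmod{\#N}$, and Lemma~\ref{homtoaut} says that such a lift is an automorphism exactly when it carries $N$ onto $N$. Since $N=\langle z\rangle$ is cyclic, this reduces to the condition that $\psi(z)$ is itself a generator of $N$. So the theorem amounts to tacking onto the homomorphic matrix equation $M\vv=\w$ one further linear condition that picks out the $\vv$'s for which $\psi(z)$ is a generator, and organizing the result by which generator $\psi(z)$ happens to be.

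The central computation is the expression for $\psi(z)$ in terms of $\vv$. With $z=f(\x)=\sprod{l=1}{s}x_{j_l}^{e_l}$, using the parametrization $\psi(x_i)=\overline{x_i}z^{v_i}$ from Theorem~\ref{matrix} and the centrality of $z$, the very same calculation as in that proof gives
$$\psi(z)=f(\overline{x_1}z^{v_1},\dots,\overline{x_n}z^{v_n})=f(\overline{\x})\cdot z^{\sum_j M_{m+1,j}v_j}=z^{-w_{m+1}+\sum_j M_{m+1,j}v_j}.$$
Thus $\psi$ is an automorphism iff the integer $a:=-w_{m+1}+\sum_j M_{m+1,j}v_j$ is a generator of $N$ when viewed as an exponent of $z$, so the problem reduces to encoding ``$z^a$ generates $N$'' as a linear equation in $\vv$ modulo $\#N$.

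The remaining work is to split into cases. For $N$ infinite, $z^a$ generates $N$ iff $a=\pm 1$, and the two choices $w_{m+1}^{(1)}=w_{m+1}+1$ and $w_{m+1}^{(2)}=w_{m+1}-1$ give exactly the two asserted integer equations $M_{m+1}\vv=w_{m+1}^{(k)}$. For $N$ finite (the application to $p$-groups is the setting where this is clean, namely $\#N=p^n$), $z^a$ generates $N$ iff $p\nmid a$, which is equivalent to $a\equiv k\pmod{p}$ for some $k\in\{1,\dots,p-1\}$, i.e.\ $\sum_j M_{m+1,j}v_j\equiv w_{m+1}+k\pmod{p}$. To install this as a row living modulo $\#N$ alongside the rows of $M\vv=\w$, one multiplies through by $\#N/p$; the result is precisely the last row of $M'\vv=\w^{(k)}$ as defined in the statement.

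The main obstacle is the bookkeeping of the scaling in the finite case and the verification that the correspondence is genuinely one-to-one: each $\vv$ satisfying $M'\vv=\w^{(k)}\pmod{\#N}$ for exactly one $k$ must arise from a unique automorphic lift $\psi$, and conversely every automorphic lift must hit exactly one of the $p-1$ allowed residue classes for the exponent of $\psi(z)$. Given the explicit formula for $\psi(z)$ above, this verification is routine, and the theorem then follows by combining Theorem~\ref{matrix} (for the rows $M\vv=\w$) with Lemma~\ref{homtoaut} (for the final row, which forces $\psi(z)$ to be a generator of $N$ and hence $\psi$ an automorphism).
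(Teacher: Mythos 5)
Your proposal matches the paper's proof essentially step for step: both invoke Theorem~\ref{matrix} to parametrize homomorphic lifts, use Lemma~\ref{homtoaut} to reduce the automorphism condition to $\psi(z)$ generating $N$, compute $\psi(z)=z^{-w_{m+1}+\sum_j m_{m+1,j}v_j}$ by the same centrality argument, and encode the generator condition as the extra row, scaled by $\#N/p$ in the finite case. Your parenthetical caution that the finite-case criterion ($p\nmid a$, equivalently $\psi(z)^{\#N/p}\neq 1$) is really only a complete characterization of generators when $\#N$ is a prime power is a fair observation that the paper itself glosses over, but the argument you give is the paper's argument.
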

\begin{proof}
For a solution $\vv$ to the equation $M\vv=\w$, we have a lift 
$\psi\in End(G)$ by Theorem \ref{matrix}.  By Lemma \ref{homtoaut}, 
$\psi\in Aut(G)$ if and only if $\psi(z)$ generates $N$.  But then 
\begin{eqnarray*}
\psi(z)&=&f(\psi(\x))=f(\overline{x_1}z^{v_1},\dots,\overline{x_n}z^{v_n})\\
&=&f(\overline{\x})f(z^{\vv})=z^{-w_{m+1}+\ssum{j=1}{n}m_{m+1,j} v_j}
\end{eqnarray*}
If $N$ is infinite we need $\psi(z)=z^{\pm 1}$ to generate $N$.  
This is equivalent to $-w_{m+1}+\ssum{j=1}{n}m_{m+1,j}v_j=\pm 1$.  These are 
exactly the matrix equations listed above.

If $N$ is finite, then we need $o(\psi(z))=\# N$, so we simply need $\psi(z)^{\frac{\# N}{p}}\neq 1$.  But this is equivalent to 
$$
-\frac{\#N}{p}w_{m+1}+\frac{\#N}{p}\ssum{j=1}{n}m_{m+1,j}v_j\neq 0 \pmod{\# N}.
$$
But, if this sum is not 0, it must be $k\frac{\#N}{p}$ for some 
$k= 1,\dots, p-1$.  These correspond to the above vectors.

\end{proof}

Similarly to above, we can generalize this to the case when $N$ is not cyclic.

If $N$ is generated by $z_1,\dots z_t$, then, following the above process, we 
can choose a presentation of each element $z_i$ and add another row to $M$ 
with this presentation, generating a matrix $M''$.  Next, we choose for each 
$z_i$ an element of $z_i'$ of $N$ which we would like to map $z_i$ to, such 
that the $z_i'$ also generate $N$.  Note that $z_i'$ must have the same order 
as $z_i$, since we have a homomorphism.

We write for each $k$ from 1 to the number of possible generator sets of 
$N$
$$z_i'=\sprod{j=1}{t} z_j^{{w_{i,m+j}}^{(k)}}.$$

\begin{corollary}
Automorphic lifts of $\varphi\in Aut(G/N)$ are in one-to-one correspondence with 
solutions to the matrix equations 
$$
\left(
\begin{smallmatrix}
M''&      &\\
 &\ddots&\\
 &      & M''\\
\end{smallmatrix}
\right)\vv = 
\left( 
\begin{smallmatrix}
\w_1^{(k)}\\
\vdots\\
\w_t^{(k)}
\end{smallmatrix}
\right)
\pmod{\#N},
$$
\end{corollary}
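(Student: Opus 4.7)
The approach is to combine Theorem \ref{autmatrix} (automorphic lifts for cyclic $N$, obtained by augmenting $M$ with a row that tracks $\psi(z)$) with the preceding corollary (homomorphic lifts for non-cyclic $N$, obtained by stacking $t$ block-diagonal copies of $M$, one per generator $z_i$). Since $N$ is central and abelian, the exponent of each generator $z_j$ in any word of the form $r_k(\g)$ or $f_i(\g)$ decouples from the exponents of the other generators, producing a block-diagonal system in which each block governs a single $z_j$-coordinate.

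First I would fix the data: coset representatives $\overline{x_i}\in \varphi(x_i N)$, noncommutative monomial expressions $f_i(\x)$ with $f_i(\x)=z_i$ in $G$ (providing the extra rows $M_{m+1},\dots,M_{m+t}$ that turn $M$ into $M''$), and target generating sets $\{z_1',\dots,z_t'\}$ of $N$, with one such choice for each index $k$. By Theorem \ref{matrix} and the proof of the earlier corollary, any homomorphic lift $\psi$ of $\varphi$ corresponds to parameters $v_{i,j}$ with $\psi(x_i)=\overline{x_i}\prod_{j=1}^{t} z_j^{v_{i,j}}$ satisfying the $t$-fold block-diagonal system built from the original $M$.

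Next I would analyze when the lift is an automorphism. By Lemma \ref{homtoaut}, $\psi$ is an automorphism iff $\psi(N)=N$, which, since $N$ is finitely generated abelian, is equivalent to $\{\psi(z_1),\dots,\psi(z_t)\}$ generating $N$. Mirroring the computation in Theorem \ref{autmatrix},
\[
\psi(z_i)=f_i(\psi(\x))=f_i(\overline{\x})\cdot f_i(z_1^{v_{\cdot,1}},\dots,z_t^{v_{\cdot,t}}),
\]
and matching exponents in each $z_j$ turns the condition $\psi(z_i)=z_i'$ into a linear equation modulo $\#N$ whose coefficients are precisely the new row $M_{m+i}$ of $M''$ and whose constant term is the corresponding entry of $\w_i^{(k)}$. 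Packaging the relation constraints and the $t$ generator constraints together yields the block-diagonal system with $t$ copies of $M''$ claimed in the statement, with the index $k$ running over the valid target generating sets.

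The main obstacle is bookkeeping rather than mathematical depth: one must enumerate the generating sets $\{z_1',\dots,z_t'\}$ so that each $z_i'$ has the same order as $z_i$ (a necessary condition imposed by $\psi$ being a homomorphism), and verify that the additional rows and augmented right-hand sides line up consistently across all $t$ blocks. Once this indexing is set up, the proof is essentially the juxtaposition of the earlier corollary's block-diagonalization argument with the row-augmentation argument of Theorem \ref{autmatrix}, which is why, as in the homomorphic case, only a brief remark noting centrality of the $z_j$ is really needed.
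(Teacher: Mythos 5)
Your proposal is correct and follows exactly the route the paper intends: the paper states this corollary without a written proof, relying on the reader to juxtapose the block-diagonal decoupling of the homomorphic corollary (one block per generator $z_j$, justified by centrality and commutativity of the $z_j$) with the row-augmentation and generator-enumeration argument of Theorem \ref{autmatrix}, which is precisely what you do. Your added observations --- that $\psi(N)=N$ is equivalent to $\{\psi(z_1),\dots,\psi(z_t)\}$ generating $N$, and that the index $k$ ranges over target generating tuples with matching orders --- are the correct bookkeeping the paper leaves implicit.
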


\begin{remark*}
Note that if we have 2 elements of infinite order as generators of $N$, then 
there are infinitely many choices for our $\w^{(k)}$, but otherwise we have a 
finite number of choices as above.
\end{remark*}

\section{An Application of the Above Techniques}
In this section, we give an application of the techniques developed above.  
Given a metacyclic group of order $p^n$ ($p$ an odd prime) represented by
$$
G:=\left<x,y|\ x^{p^{n-1}}, y^p, x^y=x^{1+p^{n-2}}\right>,
$$
we will show that $Inn(G)$ is not characteristic in $Aut(G)$.

To do this, we need information about the structure of $A:=Aut(G)$.  Schulte in \cite{MetaCyclic99} found the presentation for $A$

$$
A= \left< x_1,x_2,x_3 \left| 
\begin{smallmatrix} x_1^p,\ x_2^p,\ x_3^{(p-1)p^{n-2}},\
       x_1^{-a}\cdot  x_3^{-1}\cdot x_1 \cdot x_3\cdot  x_3^{j(p-1)p^{n-3}},\\ 
       x_2^{-a^{-1}}\cdot x_3^{-1}\cdot x_2\cdot x_3 \cdot x_3^{k(p-1)p^{n-3}},\
       x_1^{-1}\cdot x_2^{-1}\cdot x_1\cdot x_2\cdot x_3^{-(p-1)p^{n-3}}
\end{smallmatrix} \right.
\right>,
$$ 
where $a$ is a generator for the multiplicative group 
$(\integer/p^{n-2}\integer)\cross$, $a^{-1}$ is the multiplicative inverse 
$\pmod p$, and $j$ and $k$ are integers which can be determined explicitly.  

We shall henceforth refer to the center of $A$ as $Z:=Z(A)$.  By the 
commutator relations above $\left<x_3^{p-1}\right>\subseteq Z$.  Note that $A/\left<x_3^{p-1}\right>$
equals $(C_p\times C_p)\rtimes C_{p-1}$.  Since that group has trivial center, 
by the correspondence theorem, $\left<x_3^{p-1}\right>=Z$.  For ease of exposition, we 
will denote $x_3^{p-1}$ by $z$.  

Another important fact is that $I:=Inn(G)=\left<x_1,x_3^{(p-1)p^{n-3}}\right>$.  This 
follows from \cite{MetaCyclic99} by showing that conjugation by $x$ and $y$ 
correspond to the elements $x_1$ and $x_3^{(p-1)p^{n-3}}$, respectively.

From Section 1, we have a technique for determining when homomorphisms can be 
lifted from quotient groups.  Since $Z$ is large, cyclic, central, and 
characteristic in $A$, $A/Z$ is a natural candidate for this process.  

We will show that all elements of $Aut(A/Z)$ lift to $Aut(A)$, and then show
that some $\varphi \in Aut(A/Z)$ does not fix $IZ/Z$.  We then conclude 
that some $\psi \in Aut(A)$ does not fix $IZ$ and so $IZ$ is not 
characteristic in $A$.  Since $Z$ is characteristic, it follows that $I$ 
is not characteristic.

\begin{theorem}\label{inng}
The canonical mapping $\pi:Aut(A)\mapsto Aut(A/Z)$ is surjective.
\end{theorem}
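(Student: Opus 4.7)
The plan is to apply Theorem \ref{autmatrix} with $A$ in the role of $G$ and $Z=\langle z\rangle$ (cyclic of order $p^{n-2}$) in the role of $N$. For each $\varphi\in Aut(A/Z)$, an automorphic lift then exists if and only if the matrix equation
$$
M'\vv\equiv\w^{(k)}\pmod{p^{n-2}}
$$
is solvable for some $\vv\in\integer^3$ and some $k\in\{1,\dots,p-1\}$, so surjectivity of $\pi$ reduces to verifying solvability for every $\varphi$.

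First I would read off $M'$ from the six defining relations of $A$ together with the word $f(\x)=x_3^{p-1}$ representing $z$. Since $x_1$ and $x_2$ appear only through $x_1^p$, $x_2^p$, the twisted commutators with $x_3$, and $[x_1,x_2]$ (in which each has total exponent $0$), the $v_1$- and $v_2$-columns of $M'$ are very sparse, with nonzero entries $p,1-a$ and $p,1-a^{-1}$ respectively, while the $v_3$-column consists of $(p-1)p^{n-2}\equiv 0$, multiples of $(p-1)p^{n-3}$ from rows $r_4,r_5,r_6$, and the extra entry $(p-1)p^{n-3}$ coming from $f$. Next, for each $\varphi$ I would pick coset representatives $\overline{x_i}\in\varphi(x_iZ)$ and compute $\w$ by evaluating each relation of $A$ at $\overline{\x}$; the freedom in choosing each $\overline{x_i}$ modulo $Z$ corresponds to controlled shifts in $\w$ that can be used to normalize it.

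The main obstacle is verifying consistency of the resulting linear system. Rows 1--3 force $w_1,w_2\equiv 0\pmod p$ and $w_3=0$; the key point for rows 1 and 2 is that any $g\in A$ with $gZ$ lying in the elementary abelian $p$-Sylow $\langle x_1,x_2\rangle Z/Z$ of $A/Z$ automatically satisfies $g^p\in Z^p$, which follows from the commutator identity $[x_1,x_2]=z^{p^{n-3}}$ together with $p$ being odd, so that the Hall--Petresco-type correction $z^{-ab\,p^{n-2}(p-1)/2}$ vanishes modulo $p^{n-2}$. Since $1-a$ and $1-a^{-1}$ are units modulo $p^{n-2}$ (as $a$ is a primitive root mod $p^{n-2}$), rows 4 and 5 determine $v_1,v_2$ uniquely once $v_3$ is chosen, and one is left checking compatibility of these values with rows 1 and 2. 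Finally, row 6 together with the extra generator row couple through the common coefficient $(p-1)p^{n-3}$ and yield a single compatibility condition on $w_6+w_7$ modulo $p^{n-3}$; the choice of $k\in\{1,\dots,p-1\}$ then precisely matches the residue modulo $p$ that guarantees $\psi(z)$ is a generator of $Z$. The substantive work lies in carrying out these compatibility checks for every $\varphi$, which I would approach by first recording how $\varphi$ must act on the generators $x_iZ$ using that $\varphi$ preserves the characteristic subgroups of $A/Z$ (in particular the $p$-Sylow and its $\langle x_3\rangle Z/Z$-orbit structure).
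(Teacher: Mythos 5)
Your framework is the right one and your description of the matrix $M'$ is essentially correct, but the proposal stops exactly where the proof has to start: you twice defer "the substantive work" of the compatibility checks, and those checks are the entire content of the theorem. Concretely, you must show that for \emph{every} $\varphi$ the vector $\w$ is compatible with rows $1$--$5$. Note that the "controlled shifts" you invoke cannot do this: replacing $\overline{x_i}$ by $\overline{x_i}z^{c_i}$ replaces $\w$ by $\w - M\mathbf{c}$, which is just a translation of $\vv$ and never changes solvability. The paper instead uses that $KZ/Z$, with $K=\langle x_1,x_2\rangle$, is the characteristic Sylow $p$-subgroup of $A/Z$ to choose $\overline{x_1},\overline{x_2}\in K$; then $K^p=1$ gives $w_1=w_2=0$ exactly (not merely mod $p$), and $r_i(\overline{\x})\in KA'A^{(p-1)p^{n-3}}\subseteq K$ gives $p^{n-3}\mid w_4,w_5,w_6$, after which the system visibly collapses to a $3\times 3$ system mod $p$ with unit determinant $(1-a)(1-a^{-1})$. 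Some specific normalization of this kind is needed; "recording how $\varphi$ must act on the generators" is not a substitute for it.

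Second, and more seriously, your treatment of the extra generator row is circular. Eliminating $v_3$ between row $6$ (coefficient $-(p-1)p^{n-3}$, right side $w_6$) and the extra row (coefficient $(p-1)p^{n-3}$, right side $w_7+kp^{n-3}$) gives the condition $w_6+w_7\equiv -kp^{n-3}\pmod{p^{n-2}}$, which is solvable for some $k\in\{1,\dots,p-1\}$ if and only if $w_6+w_7$ is a \emph{nonzero} multiple of $p^{n-3}$ modulo $p^{n-2}$ --- that is, if and only if the homomorphic lift already sends $z$ to a generator of $Z$. Asserting that "the choice of $k$ precisely matches the residue that guarantees $\psi(z)$ is a generator" presupposes that this residue is nonzero, which is precisely the point at issue. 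The paper supplies the missing argument by group theory rather than linear algebra: $\psi(z)$ generates $Z$ iff $\psi(z)^{p^{n-3}}=[\psi(x_1),\psi(x_2)]\neq 1$, and this holds because $Z(K)=Z\cap K$ forces $C_K(h)=\langle h\rangle Z(K)$ for $h\in K\wout Z(K)$, while $\varphi(x_1Z)\notin\langle\varphi(x_2Z)\rangle$ forces $\psi(x_1)\notin\langle\psi(x_2)\rangle Z(K)$. Without an argument of this kind, your outline establishes at best that $\varphi$ lifts to an endomorphism of $A$, not to an automorphism.
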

\begin{proof}
Let $\varphi \in Aut(A/Z)$ be given and set $K:=\left<x_1,x_2\right>$.  
Studying $K$ in some detail will simplify the ensuing calculations.

Since $KZ$ is the unique Sylow $p$-subgroup of $A/Z$, it is characteristic, 
and hence $\varphi(KZ)=KZ$.  Therefore, without loss of generality, we can 
choose representatives $(\overline{x_1},\overline{x_2},\overline{x_3})$ such 
that $\varphi(x_iZ) = \overline{x_i}Z$ and 
$\overline{x_1},\overline{x_2}\in K$.  

Since $A/K$ is cyclic and hence abelian, it follows that $A'\subseteq K$.  
Notice that $K\cap Z=\left<[x_1,x_2]\right>=\left<z^{p^{n-3}}\right>$ and $K^p=1$.
Also, the exponent of $A$ is $(p-1)p^{n-2}$, so $A^{(p-1)p^{n-3}}$ contains 
only elements of order $p$, and hence is contained in $K$, since every 
element of order $p$ is in $K$.

Since $Z$ is cyclic and central, Theorem \ref{matrix} tells us that homomorphisms of $A$ that are lifts of $\varphi$ are in one-to-one correspondence with solutions to
$$
\left(
\begin{smallmatrix} 
p   & 0        & 0  \\ 
0   & p        & 0  \\ 
0   & 0        & (p-1)p^{n-2}\\
1-a & 0        & j(p-1)p^{n-3}\\ 
0   & 1-a^{-1} & k(p-1)p^{n-3}\\
0   & 0        & -(p-1)p^{n-3}
\end{smallmatrix} \right)
\left(
\begin{smallmatrix}
\\ \\  \vv \\ \\ \\  
\end{smallmatrix} \right) =
\left(
\begin{smallmatrix} 
\\ \\  \w \\ \\ \\ 
\end{smallmatrix}\right)
\pmod {\# Z}.
$$
So $\varphi$ lifts to a homomorphism if and only if this matrix is not degenerate.  We 
shall show that this matrix has $p^{n-3}$ solutions. 

Instead of calculating $\w$, we only need to show that $w_1, w_2, w_3=0$, 
and $p^{n-3}\mid w_4,w_5,w_6$.  Notice that the latter statement is equivalent 
to $z^{w_i}\in K$.

For $i=1,2$, we note that $K^p=1$ and for $i=3$, we have $r_3(\overline{\x})=1$
because the exponent of $A$ is $(p-1)p^{n-2}$, so any choice for 
$\overline{x_3}$ will give $w_3=0$.

For $i=4,5,6$, notice that since $\overline{x_1},\overline{x_2}\in K$, 
$$
r_i(\overline{\x})\in KA'A^{(p-1)p^{n-3}}.
$$  
From our comments above, we know that $A',A^{(p-1)p^{n-3}}\subseteq K$.  Hence 
$r_i(\overline{\x})\in K$, and we have $p^{n-3}\mid w_i$.

Since the matrix is taken mod $\# Z=p^{n-2}$, the third row of the matrix is 
all zeroes and $w_3=0$, so we can remove this redundancy.  Rows 1 and 2 
correspond to $pv_1=0$ and $pv_2=0$, respectively.  This is equivalent to 
$p^{n-3}\mid v_1,v_2$.

Therefore, if $\vv$ is a 
solution, it must be the case that $p^{n-3}\mid v_1,v_2$. Hence, we can remove 
the first three rows and write the matrix in the form:

$$
p^{n-3}\left(
\begin{smallmatrix} 
(1-a) & 0                & j(p-1)\\ 
0     &(1-a^{-1})        & k(p-1)\\
0     & 0                & -(p-1)
\end{smallmatrix}\right)
\left(\begin{smallmatrix}
\\ \vv \\ \\ \end{smallmatrix}\right)
=
p^{n-3}\left(\begin{smallmatrix} 
\\ \frac{\w}{p^{n-3}} \\ \\ \end{smallmatrix}\right)
\pmod{\# Z}
$$
With $v_1, v_2 \in \integer/p\integer$, and $v_3 \in \integer/p^{n-2}\integer$.

Solutions to this matrix will correspond to solutions of 
$$
\left(
\begin{smallmatrix}
1-a   &     0      &   -j  \\ 
 0    &  1-a^{-1}  &   -k  \\
 0    &     0      &  1  
\end{smallmatrix}
\right)
\left(
\begin{smallmatrix}
\\ \vv \\ \\ 
\end{smallmatrix}
\right) 
=
\left(
\begin{smallmatrix} 
\\ \frac{\w}{p^{n-3}} \\ \\ 
\end{smallmatrix}
\right)
\pmod{p}
$$

The determinant of this matrix is $D:=(1-a)(1-a^{-1})$, but  $a$ is a generator
for the multiplicative group $\left(\integer/(p^{n-1}\integer)\right)\cross$, 
so $a\neq 1 \pmod{ p}$ and $a^{-1}\neq 1 \pmod{ p}$.  Thus, $D$ is invertible, 
so the matrix is solvable.  Moreover, since $v_3\in \integer/p^{n-2}\integer$, 
and this solution only fixes $v_3 \pmod{p}$, we have $p^{n-3}$ choices for 
$v_3$, and hence $\varphi$ lifts to $p^{n-3}$ homomorphisms of $A$.  

We would now like to show that each of these homomorphisms is moreover an 
automorphism.  By Lemma \ref{homtoaut}, we know that $\psi$ is an 
automorphic lift exactly when $\psi(z)^{p^{n-3}}=[\psi(x_1),\psi(x_2)]\neq 1$.

\begin{lemma}
We have the equality 
$$
Z(K)=Z\cap K.
$$
\end{lemma}
\begin{proof}
Let $h=x_1^ax_2^b\left(z^{p^{n-3}}\right)^c\in Z(K)$ be given.  Then, since 
$[x_1,x_2]=z^{p^{n-3}}$, 
$$
hx_1=x_1h\left(z^{p^{n-3}}\right)^{-b}\text{ and }hx_2=x_2h\left(z^{p^{n-3}}\right)^a.
$$
Thus, $b=0=a$.
\end{proof}
We see that for $h\in K\wout Z(K)$, $C_K(h)=\left<h\right> Z(K)$, since 
$$
p^3=\#K>\#C_K(h)\geq p^2.
$$
But $\psi(x_1)\notin\left<\psi(x_2)\right>Z(K)$, as $\varphi(x_1Z)\notin\left<\varphi(x_2Z)\right>$.
Therefore, since $\psi(x_1)$ and $\psi(x_2)$ do not commute, 
$[\psi(x_1),\psi(x_2)]\neq 1$.  So the canonical mapping is surjective.  
\end{proof}

We will now proceed to show that $IZ/Z$ is not characteristic in $A/Z$.  Note 
that 
$$
A/Z=(IZ\times HZ)\rtimes \left<x_3\right>Z \iso \left(C_p\times C_p\right)
\rtimes C_{p-1},
$$
with the action of $x_3$ on $IZ$ being $x\mapsto x^a$ and the action of $x_3$ 
on $HZ$ being $x\mapsto x^{a^{-1}}$.  

We would like to show that 
$
(\overline{\x})=\left(x_2^{a^{-1}},x_1^a,x_3^{-1}\right)
$
extends to a homomorphism.  The presentation for $A/Z$ is the same as the 
presentation for $A$, adding the relation that $z=x_3^{p-1}=1$.  It is 
straightforward to calculate that $r_k(\overline{\x})=1$ for every $k$.  
Moreover, $\left<x_2^{a^{-1}},x_1^a,x_3^{-1}\right>=A/Z$, so this homomorphism is an 
automorphism.  This automorphism is the desired element of $Aut(A/Z)$.

\end{document}